\documentclass[11pt,letterpaper,reqno]{amsart}

\usepackage{amssymb}
\usepackage{amsmath}
\usepackage{amsthm}
\usepackage{amsfonts}
\usepackage{bbm}
\usepackage{enumitem}
\usepackage{booktabs}

\usepackage{graphicx}
\usepackage[T1]{fontenc}
\usepackage{doi}
\usepackage{float}

\usepackage{tikz}
\usetikzlibrary{positioning, shapes.geometric, arrows.meta}
\usepackage{pgfplots}
\pgfplotsset{compat=1.18}

\usepackage{bookmark}
\usepackage{hyperref}
\hypersetup{pdfstartview={FitH}}

\numberwithin{equation}{section}
\allowdisplaybreaks

\theoremstyle{plain}
\newtheorem{thm}{Theorem}[section]
\newtheorem{lem}[thm]{Lemma}
\newtheorem{prop}[thm]{Proposition}
\newtheorem{cor}[thm]{Corollary}

\newtheorem{prob}[thm]{Problem}

\theoremstyle{definition}

\theoremstyle{remark}
\newtheorem{rem}[thm]{Remark}

\DeclareMathOperator{\Tr}{Tr}

\newcommand{\A}{\mathcal{A}}
\newcommand{\Apos}{\A_{+}}

\newcommand{\nablaop}{\mathbin{\nabla}}

\newcommand{\kappaop}[1]{\mathbin{\kappa_{#1}}}

\newcommand{\unorm}[1]{%
	\left|\mkern-1.5mu\left|\mkern-1.5mu\left|#1\right|
	\mkern-1.5mu\right|\mkern-1.5mu\right|%
}

\begin{document}
	
	\title[Trace arithmetic--$\kappa_p$ inequality]{Trace arithmetic--$\kappa_p$ inequality}
	
	\author[T.~Zhang]{Teng Zhang}
	\address{School of Mathematics and Statistics, Xi'an Jiaotong University, Xi'an 710049, P.~R. China}
	\email{teng.zhang@stu.xjtu.edu.cn}
	
	\subjclass[2020]{15A60, 46L52, 47A63}
	\keywords{unitarily invariant norm, noncommutative H\"older inequality, faithful trace, $C^\ast$-algebra, Bures--Hellinger-type distances}
	
	\begin{abstract}
		Let $\A$ be a unital $C^\ast$-algebra equipped with a faithful tracial positive linear functional $\tau$. Denote by $\Apos$ its positive cone.
		For $p>0$ and $A,B\in\Apos$, we consider the operations
		\[
		A\kappaop{p}B:=\bigl(A^{p/4}B^{p/2}A^{p/4}\bigr)^{1/p},
		\qquad
		A\nablaop B:=\frac{A+B}{2}.
		\]
	We prove that, for all $p>0$ and all $A,B\in\Apos$,
	\[
	\tau(A\kappaop{p}B)\le \sqrt{\tau(A)\tau(B)}\le \tau(A\nablaop B),
	\]
	thereby answering \cite[Problem~1]{KM24}, posed by \'A.~Kom\'alovics and L.~Moln\'ar, in the affirmative.
		We also record a unitarily invariant norm analogue of the key estimate in the matrix case, and we provide explicit $2\times2$
		counterexamples showing that the triangle inequality for $d_p$ may fail when $0<p<1$ (already for $p=\tfrac12$),
		giving a partial answer to \cite[Problem~2]{KM24}.
	\end{abstract}
	
	\maketitle
	
	\section{Introduction}
	
	Let $\A$ be a unital $C^\ast$-algebra, and denote by $\Apos$ its positive cone.
	For $p>0$ and $A,B\in\Apos$, define
	\begin{equation}\label{eq:def-kappa}
		A\kappaop{p}B:=\bigl(A^{p/4}B^{p/2}A^{p/4}\bigr)^{1/p},
	\end{equation}
	and the arithmetic mean
	\begin{equation*}
		A\nablaop B:=\frac{A+B}{2}.
	\end{equation*}
	Both expressions are well defined for $A,B\in\Apos$ by the continuous functional calculus on $[0,\infty)$.
	In the matrix setting, let $\mathbb{M}_n(\mathbb{C})$ denote the algebra of all $n\times n$ complex matrices, and let
	$\mathbb{M}_n(\mathbb{C})_{+}$ be its cone of positive semidefinite matrices. For $A\in\mathbb{M}_n(\mathbb{C})$,
	we write $\Tr(A)$ for the trace of $A$.
	
	Assume now that $\tau$ is a faithful tracial positive linear functional on $\A$.
	Following Kom\'alovics and Moln\'ar \cite{KM24}, for $p>0$ we set
	\begin{equation}\label{eq:def-dptau}
		d_p^\tau(A,B):=\bigl(\tau(A\nablaop B-A\kappaop{p}B)\bigr)^{1/2},
		\qquad A,B\in\Apos.
	\end{equation}

	In the matrix case $\A=\mathbb{M}_n(\mathbb{C})$ with $\tau=\Tr$, the quantities $\sqrt2\,d_1^{\Tr}$ and $\sqrt2\,d_2^{\Tr}$
	coincide with the quantum Hellinger distance and the Bures (or Bures--Wasserstein) distance, respectively; see, for instance,
	\cite{BGJ19,SIOR17}.
	
	Kom\'alovics and Moln\'ar \cite{KM24} showed that $d_p^\tau(A,B)$ is well defined and nonnegative for $0<p\le 2$ on any $C^\ast$-algebra
	admitting a faithful trace, using the Araki--Lieb--Thirring inequality (see, e.g., \cite{Ara90}).
	In the finite-dimensional case $\A=\mathbb{M}_n(\mathbb{C})$, they further established nonnegativity for $p>2$ by combining a monotonicity
	argument in $p$ with a finite-dimensional analysis of the limit $p\to\infty$ (see \cite{AH14}).
	They pointed out that, for general $C^\ast$-algebras, the case $p>2$ is more delicate precisely because such finite-dimensional tools are unavailable,
	and they posed the following problem.
	
	\begin{prob}[{\cite[Problem~1]{KM24}}]\label{prob:KM1}
		Is it true that for every $C^\ast$-algebra $\A$ with a faithful tracial positive linear functional $\tau$,
		\[
		\tau(A\nablaop B-A\kappaop{p}B)\ge 0
		\]
		holds for all $A,B\in\Apos$ and all real numbers $p>2$?
	\end{prob}
	
	They also asked about the metric property of \eqref{eq:def-dptau}:
	
	\begin{prob}[{\cite[Problem~2]{KM24}}]\label{prob:KM2}
		Let $n\ge2$. Is $d_p^{\Tr}$ on $\mathbb{M}_n(\mathbb{C})_{+}$ a metric for each $1<p<2$,
		and is it \emph{not} a metric for each $0<p<1$?
		More generally, if $\A$ is a $C^\ast$-algebra with a faithful trace $\tau$, is $d_p^\tau$ a metric for $1<p<2$?
	\end{prob}
	
	In this paper, we answer Problem~\ref{prob:KM1} in the affirmative. In fact, we prove the desired trace inequality for all $p>0$.
	The proof  relies  on the noncommutative H\"older inequality and a standard embedding lemma.
	In addition, in the matrix case, we establish an unitarily invariant norm inequality
	which yields the trace estimate as a special case.
	We also provide explicit $2\times2$ matrices showing that the triangle inequality fails for $p=\tfrac12$, giving a concrete counterexample
	in the range $0<p<1$ in Problem~\ref{prob:KM2}.
	
	\section{The matrix case and a unitarily invariant norm inequality}
	
	For $X\in \mathbb{M}_n(\mathbb{C})$, set $|X|=(X^\ast X)^{1/2}$.
	A norm $\unorm{\cdot}$ on $\mathbb{M}_n(\mathbb{C})$ is said to be \emph{unitarily invariant} if
	$\unorm{UXV}=\unorm{X}$ for all unitary matrices $U,V$.
	
	Our starting point is the following H\"older-type inequality for unitarily invariant norms.
	
	\begin{lem}[{\cite[Exercise~IV.2.7]{Bha97}}]\label{lem:UIHolder}
		Let $\alpha,\beta,\gamma>0$ satisfy $\frac1\alpha+\frac1\beta=\frac1\gamma$.
		Then for every unitarily invariant norm $\unorm{\cdot}$ on $\mathbb{M}_n(\mathbb{C})$,
		\[
		\unorm{|AB|^{\,\gamma}}^{1/\gamma}\le \unorm{|A|^{\,\alpha}}^{1/\alpha}\,\unorm{|B|^{\,\beta}}^{1/\beta},
		\qquad A,B\in\mathbb{M}_n(\mathbb{C}).
		\]
	\end{lem}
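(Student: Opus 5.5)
The plan is to reduce the statement to the classical Hölder inequality for singular values, via Horn's weak majorization and the Fan dominance theorem. Write $s_1(X)\ge s_2(X)\ge\cdots\ge s_n(X)$ for the singular values of $X$. Since $\unorm{|X|^r}$ is a symmetric gauge function evaluated at $(s_j(X)^r)_j$, it suffices to establish the weak majorization
\[
\bigl(s_j(AB)^\gamma\bigr)_j \prec_w \bigl(s_j(A)^\gamma s_j(B)^\gamma\bigr)_j
\]
and then apply a Hölder inequality for symmetric gauge functions. By Fan dominance, the norm inequality follows once the corresponding inequality holds for the vector-level quantities.

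The first step is Horn's inequality $\prod_{j\le k}s_j(AB)\le\prod_{j\le k}s_j(A)s_j(B)$ for $k=1,\dots,n$. This comes from applying the operator norm to antisymmetric tensor powers: $\|\wedge^k(AB)\|\le\|\wedge^kA\|\,\|\wedge^kB\|$. Taking logarithms gives weak log-majorization. Because $t\mapsto e^{\gamma t}$ is convex and increasing, weak log-majorization upgrades to
\[
\sum_{j\le k}s_j(AB)^\gamma\le\sum_{j\le k}\bigl(s_j(A)s_j(B)\bigr)^\gamma .
\]
Hence $\unorm{|AB|^\gamma}\le\Phi\bigl((s_j(A)^\gamma s_j(B)^\gamma)_j\bigr)$, where $\Phi$ is the symmetric gauge function attached to $\unorm{\cdot}$.

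The second step, which is the main obstacle, is the Hölder inequality for a symmetric gauge function $\Phi$ applied to powers:
\[
\Phi(|xy|^\gamma)^{1/\gamma}\le\Phi(|x|^\alpha)^{1/\alpha}\,\Phi(|y|^\beta)^{1/\beta}.
\]
I would normalize $\Phi(|x|^\alpha)=\Phi(|y|^\beta)=1$, and use weighted Young's inequality entrywise with exponents $\alpha/\gamma$ and $\beta/\gamma$, whose reciprocals sum to one:
\[
|x_jy_j|^\gamma\le\tfrac{\gamma}{\alpha}|x_j|^\alpha+\tfrac{\gamma}{\beta}|y_j|^\beta .
\]
Monotonicity and the triangle inequality for $\Phi$ then give $\Phi(|xy|^\gamma)\le 1$. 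Here $\gamma<\alpha,\beta$, so the exponents exceed one and Young's inequality applies. Monotonicity of $\Phi$ in absolute values is a standard property of symmetric gauge functions.

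Finally, I would undo the normalization by scaling $x\mapsto tx$ and $y\mapsto sy$; this works because $\Phi$ is homogeneous of degree one. Taking $x_j=s_j(A)$ and $y_j=s_j(B)$, so that $\Phi(|x|^\alpha)=\unorm{|A|^\alpha}$, and combining with the first step yields the lemma.
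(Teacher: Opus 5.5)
Your proposal is correct and matches the standard argument behind the cited exercise: Horn's inequality gives weak majorization of the $\gamma$-th powers of singular values, and H\"older's inequality for the symmetric gauge function (your Young step with conjugate exponents $\alpha/\gamma$, $\beta/\gamma$, plus normalization) finishes the proof, including the range $\gamma<1$. The paper gives no proof of its own and only cites Bhatia's Exercise~IV.2.7, whose intended solution is this same route via $s(AB)\prec_{w\log}s(A)s(B)$ and H\"older for symmetric gauge functions.
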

	
	We next apply Lemma~\ref{lem:UIHolder} to the operator mean $A\kappaop{p}B$ and obtain a norm inequality that will later yield,
	as a special case, the desired trace estimate in the matrix setting.
	
	\begin{thm}\label{thm:matrix-uinorm}
		Let $p>0$ and $A,B\in\mathbb{M}_n(\mathbb{C})_{+}$. Then
		\begin{equation}\label{eq:matrix-main-uinorm}
			\unorm{A\kappaop{p}B}\le \unorm{A}^{1/2}\unorm{B}^{1/2}\le \frac{\unorm{A}+\unorm{B}}{2}.
		\end{equation}
	\end{thm}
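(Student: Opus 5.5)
The plan is to write $A\kappaop{p}B$ as a power of $|XY|$ for suitable $X,Y$, and then apply Lemma~\ref{lem:UIHolder} with exponents chosen so that the right-hand side involves only $\unorm{A}$ and $\unorm{B}$. Put $X:=B^{p/4}A^{p/4}$. Then
\[
X^\ast X=A^{p/4}B^{p/2}A^{p/4},
\]
so $|X|=(A^{p/4}B^{p/2}A^{p/4})^{1/2}$. Since $A\kappaop{p}B=(X^\ast X)^{1/p}$, we get
\[
A\kappaop{p}B=|X|^{2/p}=\bigl|B^{p/4}A^{p/4}\bigr|^{2/p}.
\]

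We then apply Lemma~\ref{lem:UIHolder} to the product $B^{p/4}\cdot A^{p/4}$, viewing $B^{p/4}$ as the first factor and $A^{p/4}$ as the second. Take $\gamma=2/p$ and $\alpha=\beta=4/p$; these satisfy $\frac1\alpha+\frac1\beta=\frac p4+\frac p4=\frac p2=\frac1\gamma$. Since $|B^{p/4}|^{4/p}=B$ and $|A^{p/4}|^{4/p}=A$, the lemma gives
\[
\unorm{A\kappaop{p}B}^{p/2}=\unorm{|B^{p/4}A^{p/4}|^{2/p}}^{p/2}\le \unorm{B}^{p/4}\,\unorm{A}^{p/4}.
\]
Raising both sides to the power $2/p>0$ yields the first inequality in \eqref{eq:matrix-main-uinorm}. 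The second inequality is the scalar AM--GM inequality applied to the nonnegative reals $\unorm{A}$ and $\unorm{B}$.

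The only delicate point is the bookkeeping in the identification $A\kappaop{p}B=|B^{p/4}A^{p/4}|^{2/p}$. Functional calculus composes correctly here because $X^\ast X\ge0$ and $(X^\ast X)^{1/p}=((X^\ast X)^{1/2})^{2/p}$. The exponents must also be chosen so that $\gamma=2/p$ turns $|X|$ back into $A\kappaop{p}B$. Lemma~\ref{lem:UIHolder} is stated for all positive $\alpha,\beta,\gamma$, so every $p>0$ is covered with no case split, even when $\gamma<1$.
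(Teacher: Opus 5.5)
Your proof is correct and is essentially the paper's argument: you use the same Hölder lemma with $\gamma=2/p$, $\alpha=\beta=4/p$, followed by scalar AM--GM. The only difference is that you take $X=B^{p/4}A^{p/4}$, so that $A\kappaop{p}B=(X^\ast X)^{1/p}=|X|^{2/p}$ holds exactly. The paper instead takes $X=A^{p/4}B^{p/4}$, obtains $(XX^\ast)^{1/p}$, and needs the polar decomposition and unitary invariance to pass to $|X|^{2/p}$; your ordering makes that step unnecessary.
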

	
	\begin{proof}
		Set $X:=A^{p/4}B^{p/4}$. Then $XX^\ast=A^{p/4}B^{p/2}A^{p/4}$, hence
		\[
		A\kappaop{p}B=(XX^\ast)^{1/p}.
		\]
		Let $X=U|X|$ be the polar decomposition. Then $XX^\ast=U|X|^2U^\ast$, and the functional calculus gives
		$(XX^\ast)^{1/p}=U|X|^{2/p}U^\ast$.
		By unitary invariance,
		\begin{equation}\label{eq:uinorm-polar}
			\unorm{A\kappaop{p}B}=\unorm{|X|^{2/p}}=\unorm{\bigl|A^{p/4}B^{p/4}\bigr|^{2/p}}.
		\end{equation}
		
		Apply Lemma~\ref{lem:UIHolder} with $\gamma=2/p$ and $\alpha=\beta=4/p$ to $A^{p/4}$ and $B^{p/4}$. We obtain
		\[
		\unorm{\bigl|A^{p/4}B^{p/4}\bigr|^{2/p}}^{p/2}
		\le
		\unorm{\bigl|A^{p/4}\bigr|^{4/p}}^{p/4}\,
		\unorm{\bigl|B^{p/4}\bigr|^{4/p}}^{p/4}
		=
		\unorm{A}^{p/4}\unorm{B}^{p/4}.
		\]
		Raising both sides to the power $2/p$ and using \eqref{eq:uinorm-polar} yields the first inequality in \eqref{eq:matrix-main-uinorm}.
		The second inequality follows from the scalar AM--GM inequality $\sqrt{uv}\le (u+v)/2$ for $u,v\ge 0$, applied to
		$u=\unorm{A}$ and $v=\unorm{B}$.
	\end{proof}
	
	As an immediate consequence, choosing the trace norm gives the corresponding trace inequality.
	
	\begin{cor}\label{cor:matrix-trace}
		Let $p>0$ and $A,B\in\mathbb{M}_n(\mathbb{C})_{+}$. Then
		\begin{equation}\label{eq:matrix-main-trace}
			\Tr(A\kappaop{p}B)\le \sqrt{\Tr(A)\Tr(B)}\le \Tr(A\nablaop B).
		\end{equation}
		In particular, $\Tr(A\nablaop B-A\kappaop{p}B)\ge 0$ for all $p>0$.
	\end{cor}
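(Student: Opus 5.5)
We specialize Theorem~\ref{thm:matrix-uinorm} to the trace norm $\unorm{X}=\|X\|_1:=\Tr|X|$. This norm is unitarily invariant, since $|UXV|=V^\ast|X|V$ for unitaries $U,V$. So the theorem applies with this choice.

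The first step is to identify each norm with a trace. For a positive semidefinite matrix $P$ we have $|P|=P$, hence $\|P\|_1=\Tr(P)$. The matrix $A\kappaop{p}B=(A^{p/4}B^{p/2}A^{p/4})^{1/p}$ is positive semidefinite. It is obtained by applying the functional calculus of $t\mapsto t^{1/p}$ on $[0,\infty)$ to the positive semidefinite matrix $A^{p/4}B^{p/2}A^{p/4}$. Therefore $\|A\kappaop{p}B\|_1=\Tr(A\kappaop{p}B)$, and likewise $\|A\|_1=\Tr(A)$ and $\|B\|_1=\Tr(B)$.

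Substituting these identities into \eqref{eq:matrix-main-uinorm} gives
\[
\Tr(A\kappaop{p}B)\le\sqrt{\Tr(A)\Tr(B)}\le\frac{\Tr(A)+\Tr(B)}{2}.
\]
Linearity of the trace turns the right-hand side into $\Tr(A\nablaop B)$, which proves \eqref{eq:matrix-main-trace}. Subtracting the two outer terms and using linearity once more gives $\Tr(A\nablaop B-A\kappaop{p}B)\ge0$ for every $p>0$.

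There is no real obstacle here. The only point needing care is that the trace norm coincides with the trace on positive semidefinite matrices, and this applies to $A\kappaop{p}B$ because it is positive semidefinite by construction.
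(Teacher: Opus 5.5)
Your proposal is correct and matches the paper's own proof: you specialize Theorem~\ref{thm:matrix-uinorm} to the trace norm $\|X\|_1=\Tr|X|$ and use $\|P\|_1=\Tr(P)$ for positive semidefinite $P$, applied to $A\kappaop{p}B$, $A$ and $B$. Your extra justification that the trace norm is unitarily invariant, via $|UXV|=V^\ast|X|V$, is a harmless addition.
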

	
	\begin{proof}
		Apply Theorem~\ref{thm:matrix-uinorm} to the trace norm $\|X\|_1=\Tr|X|$.
		Since $A\kappaop{p}B\ge 0$, we have $\|A\kappaop{p}B\|_1=\Tr(A\kappaop{p}B)$, and similarly
		$\|A\|_1=\Tr(A)$ and $\|B\|_1=\Tr(B)$.
		Thus \eqref{eq:matrix-main-uinorm} reduces to \eqref{eq:matrix-main-trace}.
	\end{proof}
	
	\section{$C^\ast$-algebras with faithful traces: solution of Problem~\ref{prob:KM1}}
	
	We now pass from matrices to general $C^\ast$-algebras. The argument proceeds in two steps: first, we embed $(\A,\tau)$ into a finite von Neumann
	algebra so that the trace is preserved; second, we apply H\"older's inequality in the associated noncommutative $L^p$-spaces.
	
	\begin{lem}[{\cite[Lemma~3]{KM24}}]\label{lem:KM-lemma3}
		Let $\A$ be a $C^\ast$-algebra equipped with a faithful tracial positive linear functional $\tau$.
		Then there exist a von Neumann algebra $M$ endowed with a faithful normal tracial positive linear functional $\nu$
		and an isometric $^\ast$-isomorphism $\varphi$ from $\A$ onto an ultraweakly (equivalently, $\sigma$-weakly) dense $C^\ast$-subalgebra of $M$
		such that $\nu(\varphi(X))=\tau(X)$ for all $X\in\A$.
		In particular, after normalizing $\nu$ to a tracial state, $M$ is a finite von Neumann algebra in the usual sense.
	\end{lem}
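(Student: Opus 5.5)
The plan is the standard GNS construction with respect to $\tau$, taking $M$ to be the bicommutant of the left regular representation and $\nu$ the vector functional of the cyclic vector. The main obstacle is faithfulness of $\nu$ on all of $M$, not just on $\varphi(\A)$; I will get it from a cyclic vector for the commutant, via right multiplication.

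\textbf{Construction.} Equip $\A$ with the sesquilinear form $\langle X,Y\rangle:=\tau(Y^\ast X)$. Since $\tau$ is positive and faithful, this is an inner product. Let $H$ be the completion of $\A$, and write $\xi\in H$ for the image of the unit $1$.

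For $X\in\A$, define $\varphi(X)Y:=XY$. From $Y^\ast X^\ast XY\le \|X\|^2 Y^\ast Y$ and positivity of $\tau$ we get
\[
\|XY\|^2=\tau(Y^\ast X^\ast XY)\le \|X\|^2\,\tau(Y^\ast Y).
\]
So $\varphi(X)$ extends to a bounded operator on $H$, and $\varphi$ is a unital $^\ast$-homomorphism into $B(H)$. It is injective: if $\varphi(X)=0$, then $X=\varphi(X)\xi=0$ in $H$, so $\tau(X^\ast X)=0$ and $X=0$ by faithfulness. An injective $^\ast$-homomorphism between $C^\ast$-algebras is isometric.

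Set $M:=\varphi(\A)''$. Because $\varphi$ is unital, the bicommutant theorem shows that $\varphi(\A)$ is ultraweakly dense in $M$.

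\textbf{The trace.} Define $\nu(T):=\langle T\xi,\xi\rangle$ for $T\in M$. This is a vector functional, so it is positive and normal (ultraweakly continuous). It satisfies $\nu(\varphi(X))=\tau(X)$ for $X\in\A$.

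Traciality holds on $\varphi(\A)$ because $\tau$ is tracial. I extend it to $M$ in two steps by separate ultraweak continuity of multiplication.
\begin{itemize}
\item Fix $S\in\varphi(\A)$. The functionals $T\mapsto\nu(ST)$ and $T\mapsto\nu(TS)$ are normal and agree on the dense set $\varphi(\A)$, so they agree on all of $M$.
\item Now fix $T\in M$. The same argument in the variable $S$ gives $\nu(ST)=\nu(TS)$ for all $S,T\in M$.
\end{itemize}

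\textbf{Faithfulness.} Define right multiplication $\rho(X)Y:=YX$. Using traciality,
\[
\|YX\|^2=\tau(X^\ast Y^\ast YX)=\tau(YXX^\ast Y^\ast)\le\|X\|^2\,\tau(YY^\ast)=\|X\|^2\,\|Y\|^2 .
\]
So $\rho(X)$ extends to a bounded operator on $H$. It commutes with every $\varphi(Z)$, hence $\rho(\A)\subset\varphi(\A)'=M'$.

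Since $\rho(\A)\xi$ is all of $\A$, which is dense in $H$, the vector $\xi$ is cyclic for $M'$ and therefore separating for $M$. Consequently $\nu(T^\ast T)=\|T\xi\|^2=0$ forces $T\xi=0$, and hence $T=0$.

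\textbf{Normalization.} Since $1\neq 0$ and $\tau$ is faithful, $\tau(1)>0$. Dividing $\nu$ by $\tau(1)$ gives a faithful normal tracial state, so $M$ is a finite von Neumann algebra.
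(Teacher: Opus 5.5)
The paper gives no proof of this lemma. It is imported from \cite[Lemma~3]{KM24} and used as a black box in Theorem~\ref{thm:Cstar}. Your self-contained argument is correct, and it is the standard construction behind such statements:
the GNS space of $\tau$ (no quotient needed, by faithfulness) with the left regular representation gives an injective, hence isometric, unital $^\ast$-embedding; the bicommutant theorem gives ultraweak density; the vector functional at $\xi$ is normal and extends $\tau$; and your two-step extension of traciality via separate ultraweak continuity of multiplication is valid.

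You also correctly isolated and handled the one non-formal point, namely faithfulness of $\nu$ on all of $M$ rather than merely on $\varphi(\A)$. Traciality is what makes right multiplication bounded, so $\rho(\A)\subset M'$ with $\rho(\A)\xi=\A$ dense, and therefore $\xi$ is separating for $M$. Compared with the citation, your route shows exactly where traciality enters (boundedness of $\rho(X)$ and the extension of the trace property), at the cost of length.

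One small caveat: you use the unit of $\A$ to define $\xi$, while the lemma as quoted does not say ``unital''. This is harmless here, because the paper assumes $\A$ unital throughout and Theorem~\ref{thm:Cstar} only needs that case. For nonunital $\A$:
one takes $\xi$ to be the $H$-limit of an increasing approximate unit $(e_\lambda)$, which is Cauchy because $\tau\bigl((e_\lambda-e_\mu)^2\bigr)\le\tau(e_\lambda)-\tau(e_\mu)$ for $\lambda\ge\mu$;
one notes that $\varphi$ is nondegenerate, so the bicommutant theorem still applies;
and the rest of your argument goes through essentially verbatim, with $\nu(1_M)=\|\xi\|^2>0$ replacing $\tau(1)>0$ in the normalization step.
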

	
	\begin{lem}[{\cite[p.~1464]{PS03}}]\label{lem:nc-Holder}
		Let $(M,\nu)$ be a finite von Neumann algebra with a faithful normal finite trace $\nu$ (not necessarily normalized).
		For $0<s<\infty$ and $X\in L_s(M,\nu)$, set $\|X\|_s=(\nu(|X|^s))^{1/s}$.
		If $0<p,q,r<\infty$ satisfy $\frac1r=\frac1p+\frac1q$, then
		\[
		\|XY\|_r\le \|X\|_p\,\|Y\|_q,
		\qquad X\in L_p(M,\nu),\ Y\in L_q(M,\nu).
		\]
		When $s<1$, $\|\cdot\|_s$ is only a quasi-norm; nevertheless, the above H\"older inequality remains valid.
	\end{lem}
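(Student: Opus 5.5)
The plan is to reduce the inequality to a scalar H\"older inequality for generalized singular value functions, in the sense of Fack--Kosaki.

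For a $\nu$-measurable operator $X$ affiliated with $M$, let $\mu_t(X)$, $0<t<\nu(1)$, denote its generalized singular number function. Two standard facts will be used.

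\begin{enumerate}[label=(\roman*)]
\item The spectral/rearrangement formula
\[
\nu(f(|X|))=\int_0^{\nu(1)} f(\mu_t(X))\,dt
\]
holds for continuous increasing $f\ge0$ with $f(0)=0$. In particular,
\[
\|X\|_s^s=\int_0^{\nu(1)}\mu_t(X)^s\,dt \qquad \text{for every } 0<s<\infty .
\]
\item The Horn-type log-majorization
\[
\int_0^{t}\log\mu_u(XY)\,du\le\int_0^{t}\log\mu_u(X)\,du+\int_0^{t}\log\mu_u(Y)\,du, \qquad 0<t<\nu(1).
\]
\end{enumerate}

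Fact (ii) is the finite von Neumann algebra analogue of $\prod_{j\le k}s_j(XY)\le\prod_{j\le k}s_j(X)s_j(Y)$. I would take it from Fack--Kosaki, where it is proved by combining $\mu_t(XY)\le\mu_{t_1}(X)\mu_{t_2}(Y)$ for $t=t_1+t_2$ with the Fuglede--Kadison determinant on reduced algebras $eMe$.

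Granting (ii), the next step is to pass from log-majorization to majorization of powers. The function $x\mapsto e^{rx}$ is increasing and convex. Weak majorization of the decreasing functions $\log\mu(XY)$ by $\log\mu(X)+\log\mu(Y)$ on $(0,\nu(1))$ therefore yields, by the continuous Hardy--Littlewood--P\'olya/Karamata argument,
\[
\int_0^{\nu(1)}\mu_t(XY)^r\,dt\le\int_0^{\nu(1)}\mu_t(X)^r\mu_t(Y)^r\,dt .
\]
This is $\|XY\|_r^r$ by (i). Now apply the classical scalar H\"older inequality on $(0,\nu(1))$ with the conjugate exponents $p/r$ and $q/r$. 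These satisfy $r/p+r/q=1$ and are both $\ge1$, regardless of whether $r<1$. This gives
\[
\int\mu^r(X)\mu^r(Y)\le\Bigl(\int\mu(X)^p\Bigr)^{r/p}\Bigl(\int\mu(Y)^q\Bigr)^{r/q}=\|X\|_p^r\|Y\|_q^r .
\]
Taking $r$-th roots finishes the proof. Note that the quasi-norm issue for $s<1$ never enters, since no triangle inequality is used, only scalar H\"older.

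The main obstacle is step (ii): the continuous log-majorization for products in a finite von Neumann algebra, including unbounded $\nu$-measurable $X,Y$. My approach would be the following.

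\begin{enumerate}[label=(\alph*)]
\item First prove it for bounded $X,Y\in M$. Use Fuglede--Kadison: for a projection $e$ with $\nu(e)=t$, one has
\[
\exp\int_0^t\log\mu_u(Z)\,du=\sup_{e}\Delta_{eMe}(|Z|e),
\]
and the determinant is multiplicative.
\item Then extend to $X\in L_p$, $Y\in L_q$ by truncating with spectral projections $X_n=Xe_{[0,n]}(|X|)$, and similarly for $Y$. Use monotone convergence of $\mu_t$ and Fatou's lemma for $\mu_t(X_nY_n)\to\mu_t(XY)$ in measure.
\end{enumerate}

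A fallback for the case $r\ge1$ is the standard duality proof, but the unified majorization route is preferable because it handles $r<1$ directly.
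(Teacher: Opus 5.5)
The paper gives no argument of its own for this lemma. It imports it from the survey cited as PS03, where it is a standard fact whose proof for the full range $0<r<\infty$ goes back to Fack and Kosaki. Your proposal reconstructs that underlying proof, and the chain is correct:
\begin{itemize}
\item the distribution formula $\|X\|_s^s=\int_0^{\nu(1)}\mu_t(X)^s\,dt$;
\item the Horn-type log-majorization for $\mu(XY)$;
\item the passage to $\int_0^{\nu(1)}\mu_t(XY)^r\,dt\le\int_0^{\nu(1)}\mu_t(X)^r\mu_t(Y)^r\,dt$ via the increasing convex function $x\mapsto e^{rx}$;
\item scalar H\"older with exponents $p/r,q/r>1$.
\end{itemize}
It also shows clearly why $r<1$ costs nothing: the quasi-triangle inequality never enters. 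Compared with the paper, which treats the lemma as a black box, your route is self-contained modulo the Fack--Kosaki majorization theorem. The price is machinery (generalized $s$-numbers, Fuglede--Kadison determinants) much heavier than anything else in the paper. In the paper's only application, $X=A^{p/4}$ and $Y=B^{p/4}$ lie in $M$, so your extension step (b) is not needed there.

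Two details in your sketch of (ii) would need repair if written out.

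\emph{Step (a).} The variational formula $\exp\int_0^t\log\mu_u(Z)\,du=\sup\{\Delta_{eMe}((e|Z|^2e)^{1/2}):\nu(e)=t\}$ presupposes that $M$ is diffuse. In $\mathbb{M}_n(\mathbb{C})$ with $\Tr$, for instance, there is no projection of non-integer trace. The standard remedy is to pass to $M\bar{\otimes}L^\infty[0,1]$ with trace $\nu\otimes\int$, which leaves every $\mu_t$ unchanged. Note also that $|Z|e\notin eMe$, so the determinant should be taken of $(e|Z|^2e)^{1/2}$, as written here.

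\emph{Step (b).} Fatou's lemma cannot be applied to $\log\mu_u(X_nY_n)$. These functions have no integrable lower bound: with your truncations, when $X$ or $Y$ is unbounded, they equal $-\infty$ on a tail of positive length. So the limit passage at the logarithmic level is unjustified. Instead, exponentiate first:
\begin{itemize}
\item Derive the power inequality for the bounded truncations, $\int_0^{\nu(1)}\mu_u(X_nY_n)^r\,du\le\int_0^{\nu(1)}\mu_u(X)^r\mu_u(Y)^r\,du$, using $\mu_u(X_n)\le\mu_u(X)$ and $\mu_u(Y_n)\le\mu_u(Y)$.
\item Use $\mu_u(XY)\le\liminf_n\mu_u(X_nY_n)$, which follows from $X_nY_n\to XY$ in measure.
\item Apply Fatou's lemma to these nonnegative functions.
\end{itemize}
Since H\"older only needs the power inequality, you can bypass the log-majorization for unbounded operators entirely.
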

	
	Combining these two ingredients yields the desired trace inequality in full generality.
	
	\begin{thm}\label{thm:Cstar}
		Let $\A$ be a unital $C^\ast$-algebra with a faithful tracial positive linear functional $\tau$.
		Then for every $p>0$ and all $A,B\in\Apos$,
		\begin{equation}\label{eq:Cstar-main}
			\tau(A\kappaop{p}B)\le \sqrt{\tau(A)\tau(B)}\le \tau(A\nablaop B).
		\end{equation}
		In particular, $\tau(A\nablaop B-A\kappaop{p}B)\ge 0$ for all $p>0$.
	\end{thm}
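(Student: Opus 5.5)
The plan is to transport the matrix argument of Theorem~\ref{thm:matrix-uinorm} to a finite von Neumann algebra, using Lemma~\ref{lem:KM-lemma3} for the embedding and Lemma~\ref{lem:nc-Holder} in place of Lemma~\ref{lem:UIHolder}.

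\textbf{Step 1: embedding.} Fix $p>0$ and $A,B\in\Apos$. By Lemma~\ref{lem:KM-lemma3}, choose $(M,\nu)$ and a $^\ast$-isomorphism $\varphi$ with $\nu\circ\varphi=\tau$. Since $\varphi$ is a $^\ast$-homomorphism, it commutes with the continuous functional calculus on $[0,\infty)$. Hence $\varphi(A\kappaop{p}B)=\varphi(A)\kappaop{p}\varphi(B)$, and it suffices to prove
\[
\nu(a\kappaop{p}b)\le\sqrt{\nu(a)\nu(b)},
\qquad a=\varphi(A),\ b=\varphi(B)\in M_+ .
\]
The second inequality in \eqref{eq:Cstar-main} is just the scalar AM--GM inequality, since $\tau(A\nablaop B)=\frac{\tau(A)+\tau(B)}{2}$.

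\textbf{Step 2: rewrite $a\kappaop{p}b$.} Put $x:=a^{p/4}b^{p/4}\in M$. Then $xx^\ast=a^{p/4}b^{p/2}a^{p/4}$, so $a\kappaop{p}b=(xx^\ast)^{1/p}$.

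The main point to check is the identity
\[
\nu\bigl((xx^\ast)^{s}\bigr)=\nu\bigl((x^\ast x)^{s}\bigr),\qquad s>0.
\]
Take the polar decomposition $x=v|x|$ in $M$, with $v$ a partial isometry. Then $xx^\ast=v|x|^2v^\ast$, and $v^\ast v$ is the support projection of $|x|$. This gives $(xx^\ast)^s=v|x|^{2s}v^\ast$, which can be verified on polynomials without constant term and extended by continuity. Traciality then yields $\nu(v|x|^{2s}v^\ast)=\nu(|x|^{2s}v^\ast v)=\nu(|x|^{2s})$.

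Consequently,
\[
\nu(a\kappaop{p}b)=\nu\bigl(|x|^{2/p}\bigr)=\|x\|_{2/p}^{2/p}.
\]
The matrix proof used unitary invariance at this point; in $M$ the partial isometry argument above replaces it.

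\textbf{Step 3: H\"older.} Every element of $M$ lies in each $L_s(M,\nu)$ because $\nu$ is finite. Apply Lemma~\ref{lem:nc-Holder} with $r=2/p$ and exponents $4/p,4/p$, which satisfy $\frac{p}{2}=\frac{p}{4}+\frac{p}{4}$. This gives
\[
\|x\|_{2/p}\le\|a^{p/4}\|_{4/p}\,\|b^{p/4}\|_{4/p}=\nu(a)^{p/4}\,\nu(b)^{p/4}.
\]
Raising both sides to the power $2/p$ yields $\nu(a\kappaop{p}b)\le\nu(a)^{1/2}\nu(b)^{1/2}$. Pulling back through $\varphi$ gives the first inequality in \eqref{eq:Cstar-main}, and the ``in particular'' statement follows by combining the two inequalities.
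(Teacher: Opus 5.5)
Your proposal is correct and follows essentially the same route as the paper: you embed via Lemma~\ref{lem:KM-lemma3}, use the polar decomposition and traciality to get $\nu(a\kappaop{p}b)=\|a^{p/4}b^{p/4}\|_{2/p}^{2/p}$, apply noncommutative H\"older with exponents $2/p$ and $4/p,4/p$, and finish with AM--GM. Your added justifications are harmless extra detail: the polynomial-approximation argument for $(xx^\ast)^s=v|x|^{2s}v^\ast$, and the remark that $M\subset L_s(M,\nu)$ for every $s>0$.
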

	\begin{proof}
		By Lemma~\ref{lem:KM-lemma3}, we may (via $\varphi$) identify $\A$ with an ultraweakly dense $C^\ast$-subalgebra of a finite von Neumann algebra $(M,\nu)$,
		in such a way that $\tau=\nu|_{\A}$. Since $\varphi$ respects the continuous functional calculus, it suffices to prove \eqref{eq:Cstar-main} in $(M,\nu)$.
		
		Fix $p>0$ and $A,B\in M_{+}$, and set $X:=A^{p/4}B^{p/4}$. Then $XX^\ast=A^{p/4}B^{p/2}A^{p/4}$, and hence
		\[
		A\kappaop{p}B=(XX^\ast)^{1/p}.
		\]
		Let $X=u|X|$ be the polar decomposition in $M$, where $u$ is a partial isometry. Then $XX^\ast=u|X|^2u^\ast$, so
		$(XX^\ast)^{1/p}=u|X|^{2/p}u^\ast$.
		Using traciality and the fact that $u^\ast u$ is the support projection of $|X|$ (so that $|X|^{2/p}=|X|^{2/p}u^\ast u$), we obtain
		\[
		\nu(A\kappaop{p}B)
		=\nu\bigl((XX^\ast)^{1/p}\bigr)
		=\nu\bigl(u|X|^{2/p}u^\ast\bigr)
		=\nu\bigl(|X|^{2/p}u^\ast u\bigr)
		=\nu\bigl(|X|^{2/p}\bigr)
		=\|X\|_{2/p}^{2/p}.
		\]
		
		Now apply Lemma~\ref{lem:nc-Holder} with exponents $2/p$ and $4/p$:
		\[
		\|A^{p/4}B^{p/4}\|_{2/p}\le \|A^{p/4}\|_{4/p}\,\|B^{p/4}\|_{4/p}.
		\]
		Since $A^{p/4}\ge 0$, we have $\|A^{p/4}\|_{4/p}=(\nu(A))^{p/4}$, and similarly $\|B^{p/4}\|_{4/p}=(\nu(B))^{p/4}$.
		Raising both sides to the power $2/p$ yields
		\[
		\nu(A\kappaop{p}B)=\|X\|_{2/p}^{2/p}\le \sqrt{\nu(A)\nu(B)}.
		\]
		Finally, the scalar AM--GM inequality gives
		$\sqrt{\nu(A)\nu(B)}\le \frac{\nu(A)+\nu(B)}{2}=\nu(A\nablaop B)$, completing the proof.
	\end{proof}
	
	\section{On Problem~\ref{prob:KM2}: an explicit counterexample for $p=\tfrac12$}
	
	In view of Theorem~\ref{thm:Cstar}, the quantity $d_p^\tau(A,B)$ is well defined and nonnegative for all $p>0$.
	Problem~\ref{prob:KM2} asks whether $d_p^{\Tr}$ (respectively, $d_p^\tau$) is a \emph{metric} in certain parameter ranges.
	In this section, we exhibit explicit $2\times2$ matrices showing that the triangle inequality may fail already for $p=\tfrac12$.
	Throughout this section, we work in $\mathbb{M}_n(\mathbb{C})_{++}$, the cone of positive definite matrices, and we keep the notation
	from \eqref{eq:def-kappa} and \eqref{eq:def-dptau}. Namely, for $p>0$ and $X,Y\in\mathbb{M}_n(\mathbb{C})_{++}$, we set
	\[
	X\kappaop{p} Y:=\bigl(X^{p/4}Y^{p/2}X^{p/4}\bigr)^{1/p},
	\qquad
	d_p(X,Y):=\Bigl(\Tr\Bigl(\frac{X+Y}{2}-X\kappaop{p} Y\Bigr)\Bigr)^{1/2}.
	\]
	
	To streamline the computations in the $2\times2$ case, we first record a convenient closed form for the principal square root.
	
	\begin{lem}\label{lem:2x2-sqrt-closed-form}
		Let $M\in\mathbb{M}_2(\mathbb{C})_{++}$. Then its principal square root is given by
		\begin{equation}\label{eq:2x2-sqrt-closed-form}
			M^{1/2}=\frac{M+\sqrt{\det M}\,I}{\sqrt{\Tr M+2\sqrt{\det M}}}.
		\end{equation}
	\end{lem}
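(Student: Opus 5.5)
The natural route is Cayley--Hamilton. Let $S:=M^{1/2}$, the unique positive definite square root, with eigenvalues $\sqrt{\lambda_1},\sqrt{\lambda_2}>0$, where $\lambda_1,\lambda_2>0$ are the eigenvalues of $M$. Then
\[
\det S=\sqrt{\lambda_1\lambda_2}=\sqrt{\det M},
\qquad
\Tr S=\sqrt{\lambda_1}+\sqrt{\lambda_2}>0 .
\]
For a $2\times2$ matrix, Cayley--Hamilton reads
\[
S^2-(\Tr S)\,S+(\det S)\,I=0 .
\]
Substituting $S^2=M$ and $\det S=\sqrt{\det M}$, we get
\[
(\Tr S)\,S=M+\sqrt{\det M}\,I .
\]

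It remains to compute $\Tr S$. Squaring gives
\[
(\Tr S)^2=\lambda_1+\lambda_2+2\sqrt{\lambda_1\lambda_2}=\Tr M+2\sqrt{\det M}.
\]
Since $\Tr S>0$, we may take the positive root:
\[
\Tr S=\sqrt{\Tr M+2\sqrt{\det M}},
\]
and this quantity is nonzero. Dividing the identity $(\Tr S)\,S=M+\sqrt{\det M}\,I$ by $\Tr S$ gives exactly \eqref{eq:2x2-sqrt-closed-form}.

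There is no real obstacle here. The only points that need care are the sign choices. First, $\det S$ must equal the \emph{positive} root $\sqrt{\det M}$, which holds because $S$ is positive definite. Second, $\Tr S$ must be the positive root, which also follows from positivity of $S$ and guarantees that the denominator is nonzero. As an optional consistency check, one can note that the right-hand side of \eqref{eq:2x2-sqrt-closed-form} is Hermitian and positive definite, since $M+\sqrt{\det M}\,I>0$. By uniqueness of the positive square root, it therefore suffices to know that it squares to $M$, and the Cayley--Hamilton computation above gives exactly this.
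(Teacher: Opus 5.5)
Your proof is correct, and it takes a different route from the paper. The paper rescales to $A=\delta^{-1}M$ with $\delta=\sqrt{\det M}$, so that $\det A=1$. It then quotes Bhatia's formula $A\#B=(A+B)/\sqrt{\det(A+B)}$ for $2\times2$ positive matrices of determinant one, takes $B=I$ so that $A\#I=A^{1/2}$, and finally undoes the scaling using $\det(M+\delta I)=\delta(\Tr M+2\delta)$.

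You instead apply Cayley--Hamilton directly to $S=M^{1/2}$. This gives $(\Tr S)\,S=M+\sqrt{\det M}\,I$ at once, and you pin down $\Tr S$ from $(\Tr S)^2=\Tr M+2\sqrt{\det M}$ together with $\Tr S>0$. In fact, the cited formula with $B=I$ is exactly your identity in the determinant-one case, since $\det(A+I)=\Tr A+2$ when $\det A=1$. So the paper's argument is your computation routed through an external reference plus a normalization step.

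Your version is shorter and self-contained, and it proves slightly more. Because you only use $\Tr S>0$, the formula holds for every nonzero positive semidefinite $M$; for example, $M^{1/2}=M/\sqrt{\Tr M}$ when $\det M=0$. The paper's rescaling, by contrast, requires $\det M>0$. What the paper's route buys is mainly context: it links the formula to the geometric mean $\#$, in keeping with the paper's operator-mean theme. Your closing ``consistency check'' is harmless but unnecessary, since the Cayley--Hamilton step already identifies the right-hand side with $S$ itself.
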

	
	\begin{proof}
		Set $\delta:=\sqrt{\det M}>0$ and define $A:=\delta^{-1}M$. Then $A\in\mathbb{M}_2(\mathbb{C})_{++}$ and $\det A=1$.
		Let $B:=I$, so that $\det B=1$ as well. By \cite[Proposition~4.1.12]{Bha07}, for $2\times2$ positive matrices with determinant one, we have
		\[
		A\#B=\frac{A+B}{\sqrt{\det(A+B)}}.
		\]
		Taking $B=I$ and using $A\#I=A^{1/2}$ yields
		\[
		A^{1/2}=\frac{A+I}{\sqrt{\det(A+I)}}.
		\]
		Substituting $A=\delta^{-1}M$ gives
		\[
		\frac{M^{1/2}}{\sqrt{\delta}}
		=\frac{\delta^{-1}(M+\delta I)}{\sqrt{\det\!\bigl(\delta^{-1}(M+\delta I)\bigr)}}
		=\frac{M+\delta I}{\sqrt{\det(M+\delta I)}}.
		\]
		Hence
		\begin{equation}\label{eq:intermediate}
			M^{1/2}=\sqrt{\delta}\,\frac{M+\delta I}{\sqrt{\det(M+\delta I)}}.
		\end{equation}
		
		It remains to compute $\det(M+\delta I)$. For any $2\times2$ matrix $M$ and any scalar $t$, one has
		\[
		\det(M+tI)=t^2+t\,\Tr M+\det M,
		\]
		for instance by the characteristic polynomial of $M$.
		With $t=\delta$ and $\det M=\delta^2$, we obtain
		\[
		\det(M+\delta I)=\delta^2+\delta\,\Tr M+\delta^2=\delta\bigl(\Tr M+2\delta\bigr).
		\]
		Therefore $\sqrt{\det(M+\delta I)}=\sqrt{\delta}\,\sqrt{\Tr M+2\delta}$, and \eqref{eq:intermediate} becomes
		\[
		M^{1/2}
		=\sqrt{\delta}\,\frac{M+\delta I}{\sqrt{\delta}\,\sqrt{\Tr M+2\delta}}
		=\frac{M+\delta I}{\sqrt{\Tr M+2\delta}},
		\]
		which is exactly \eqref{eq:2x2-sqrt-closed-form}.
	\end{proof}
	
	With Lemma~\ref{lem:2x2-sqrt-closed-form} in hand, we can now construct a concrete triple for which the triangle inequality fails.
	
	\begin{prop}\label{prop:counterexample-p12}
		There exist $A,B,C\in\mathbb{M}_2(\mathbb{C})_{++}$ such that
		\[
		d_{1/2}(A,B)>d_{1/2}(A,C)+d_{1/2}(C,B).
		\]
	\end{prop}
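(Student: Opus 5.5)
For $p=\tfrac12$ the definition reads
\[
X\kappaop{1/2}Y=\bigl(X^{1/8}Y^{1/4}X^{1/8}\bigr)^{2},
\]
so $\Tr(X\kappaop{1/2}Y)=\Tr\bigl(X^{1/8}Y^{1/4}X^{1/8}X^{1/8}Y^{1/4}X^{1/8}\bigr)=\Tr\bigl(X^{1/4}Y^{1/4}X^{1/4}Y^{1/4}\bigr)$ by cyclicity. Hence $d_{1/2}$ only requires fourth roots and a product trace, with no operator square root of a non-commuting product. The plan is to choose $A,B,C$ so that their fourth roots are explicit. Concretely, I would take $A=P^4$, $B=Q^4$, $C=R^4$ with $P,Q,R\in\mathbb{M}_2(\mathbb{C})_{++}$ having simple integer or rational entries. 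Then
\[
d_{1/2}(A,B)^2=\tfrac12\Tr(P^4+Q^4)-\Tr(PQPQ),
\]
and similarly for the other pairs. Everything is a polynomial in the entries of $P,Q,R$. Lemma~\ref{lem:2x2-sqrt-closed-form} is an alternative if one prefers to parametrize through square roots, but with this parametrization it should not be necessary.

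To find a violating triple, I would exploit the failure of the triangle inequality near degenerate configurations. The natural first attempt is commuting (diagonal) matrices, where
\[
d_{1/2}^2=\sum_i\Bigl(\tfrac{a_i+b_i}{2}-\sqrt{a_ib_i}\Bigr)=\tfrac12\sum_i\bigl(\sqrt{a_i}-\sqrt{b_i}\bigr)^2,
\]
which is a genuine (Hellinger) metric. So noncommutativity is essential, and the counterexample must use non-commuting matrices.

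A promising choice is to take $A$ and $B$ as rank-one-like projections in different directions, perturbed to be positive definite, and $C$ as a suitable "midpoint". For instance, set $P=\mathrm{diag}(1,\varepsilon)$ and let $Q=UPU^\ast$ with $U$ a rotation. Then pick $R$, for example $R=\mathrm{diag}(1,1)$ scaled by $t$, or $R$ equal to the rotation by half the angle, and search over the angle and the scale $t$. In the limit $\varepsilon\to0$, with projections $e,f$ at angle $\theta$,
\[
\Tr(PQPQ)\to\Tr(efef)=\cos^4\theta .
\]
This gives $d^2(A,B)\approx1-\cos^4\theta$, while for the half-angle midpoint $d^2(A,C)\approx1-\cos^4(\theta/2)$. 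The condition to check is
\[
\sqrt{1-\cos^4\theta}>2\sqrt{1-\cos^4(\theta/2)}.
\]
For small $\theta$ the left side is about $\sqrt2\,\theta$ and the right side is about $2\sqrt{2}\,\theta/2=\sqrt2\,\theta$, so the comparison reduces to next-order terms. This makes the midpoint ansatz delicate, and I would numerically scan $\theta$ together with a scaling $t$ of $C$. Since traces of $P^4$ and $R^4$ scale differently from the mixed term, that extra freedom should help.

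The main obstacle is locating explicit matrices with a clean, verifiable, strict violation. Once candidates are found numerically, I would round them to rational entries (or to entries involving only simple radicals). I would then compute the three quantities $\Tr(PQPQ)$, $\Tr(PRPR)$, $\Tr(RQRQ)$ and the traces of the fourth powers exactly. The final step is to verify the strict inequality between the square roots, either by squaring twice or by rigorous decimal bounds. If the projection-limit ansatz does not produce a violation, my fallback is a small random search over $2\times2$ positive definite triples. This search is cheap because each distance is a closed-form polynomial expression in the entries.
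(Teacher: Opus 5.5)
Your cyclicity reduction $\Tr(X\kappaop{1/2}Y)=\Tr(X^{1/4}Y^{1/4}X^{1/4}Y^{1/4})$ is correct. So is your remark that commuting triples reduce to the Hellinger metric and cannot work. But the statement is existential, and your proposal never produces a triple or verifies a strict violation. The step you call the main obstacle is the entire content of the proof, and it is deferred to an unexecuted search.

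Worse, the heuristic meant to guide that search rules itself out. Write $u=\cos^2(\theta/2)\in(0,1)$. Then $1-\cos^4\theta=4u(1-u)\bigl(1+(2u-1)^2\bigr)$ and $1-\cos^4(\theta/2)=(1-u)(1+u)$. So your condition $\sqrt{1-\cos^4\theta}>2\sqrt{1-\cos^4(\theta/2)}$ is equivalent to $u(2u-1)^2>1$, which fails for every $\theta\in(0,\pi)$, not just to leading order. The scale $t$ does not help in this limit. There $d_{1/2}(A,C)^2\to\tfrac12(1+t^4)-t^2u^2$, which is minimized at $t^2=u^2$ with value $\tfrac12(1-u^4)$. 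The required inequality $1-(2u-1)^4>2(1-u^4)$ then fails, because $2u^4-(2u-1)^4$ is increasing on $[0,1]$ and equals $1$ at $u=1$. The choice $C=t^4I$ gives $2d_{1/2}(A,C)\ge 1\ge d_{1/2}(A,B)$ in the limit, for every $t$. So the proposal offers no evidence, let alone a proof, that a violating triple exists.

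The missing idea is how to choose $C$. The paper takes $A=\mathrm{diag}(4,1)$ and $B=UAU^{\mathsf T}$, with the symmetric orthogonal $U=\frac1{\sqrt2}\begin{pmatrix}1&1\\1&-1\end{pmatrix}$, and $C=\bigl(\tfrac{A^{1/2}+B^{1/2}}{2}\bigr)^2$. The reason this works (the paper does not say it explicitly) is that $d_1(X,Y)^2=\tfrac12\Tr\bigl((X^{1/2}-Y^{1/2})^2\bigr)$. So this $C$ is a $d_1$-midpoint, the triangle inequality is an exact equality at $p=1$, and the sign at $p=\tfrac12$ is decided by the noncommutative correction alone. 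Moreover $UCU^{\mathsf T}=C$, so $d_{1/2}(B,C)=d_{1/2}(A,C)$, and one only needs $d_{1/2}(A,B)^2>4d_{1/2}(A,C)^2$. Lemma~\ref{lem:2x2-sqrt-closed-form} gives $C^{1/4}=\bigl(\tfrac{A^{1/2}+B^{1/2}}{2}\bigr)^{1/2}$ in closed form. The comparison then reduces to $\tfrac{3\sqrt{34}}{4}-\sqrt{17}-\tfrac14>0$, i.e.\ $1089>1088$.

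That margin is roughly $10^{-4}$ against quantities near $0.5$, which shows how fragile the violation is. Your plan scans a family in which $C$ is a scaled rotation of $A$; the paper's $C$ is not of that form, since its eigenvalue ratio is about $2.6$ rather than $4$. Scanning that family and then rounding to rational fourth roots could easily miss the violation or destroy it. Your cyclic-trace formula is a good tool for the final exact check once the right triple is chosen. With $A^{1/4}=\mathrm{diag}(\sqrt2,1)$ and $N=C^{1/4}$, it gives $\Tr(A^{1/4}NA^{1/4}N)=2n_{11}^2+2\sqrt2\,n_{12}^2+n_{22}^2$. Note that these fourth roots are not rational. In any case, the proof must actually exhibit the triple and carry out that check.
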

	
	\begin{proof}
		Set
		\[
		A=\begin{pmatrix}4&0\\[2pt]0&1\end{pmatrix},\qquad
		B=\begin{pmatrix}\frac52&\frac32\\[2pt]\frac32&\frac52\end{pmatrix},\qquad
		C=\begin{pmatrix}\frac{25}{8}&\frac34\\[2pt]\frac34&\frac{13}{8}\end{pmatrix}.
		\]
		All three matrices are real symmetric and positive definite:
		\[
		\det A=4,\qquad
		\det B=\Bigl(\frac52\Bigr)^2-\Bigl(\frac32\Bigr)^2=4,\qquad
		\det C=\frac{25}{8}\cdot\frac{13}{8}-\Bigl(\frac34\Bigr)^2=\frac{289}{64}>0.
		\]
		
		Fix $p=\tfrac12$. Then
		\[
	A\kappaop{1/2}B=\bigl(A^{1/8}B^{1/4}A^{1/8}\bigr)^2
		\]
		and hence
		\[
		d_{1/2}(A,B)^2
		=\Tr\Bigl(\frac{A+B}{2}\Bigr)-\Tr\Bigl(\bigl(A^{1/8}B^{1/4}A^{1/8}\bigr)^2\Bigr).
		\]
		
		\medskip
		\noindent\emph{Step 1: exact computation of $d_{1/2}(A,B)^2$.}
		Since $A=\mathrm{diag}(4,1)$,
		\[
		A^{1/8}=\mathrm{diag}(4^{1/8},1)=\mathrm{diag}(2^{1/4},1).
		\]
		Set $s:=2^{1/4}$, so that $s^2=\sqrt2$. Let
		\[
		U:=\frac{1}{\sqrt2}\begin{pmatrix}1&1\\[2pt]1&-1\end{pmatrix}.
		\]
		Then $U$ is orthogonal and one checks that
		\[
		B=U\begin{pmatrix}4&0\\[2pt]0&1\end{pmatrix}U^{\mathsf T},
		\qquad
		B^{1/4}
		=U\begin{pmatrix}\sqrt2&0\\[2pt]0&1\end{pmatrix}U^{\mathsf T}
		=\frac12\begin{pmatrix}\sqrt2+1&\sqrt2-1\\[2pt]\sqrt2-1&\sqrt2+1\end{pmatrix}.
		\]
		Define $P:=A^{1/8}B^{1/4}A^{1/8}$. Then
		\[
		P=\frac12
		\begin{pmatrix}
			s^2(\sqrt2+1) & s(\sqrt2-1)\\[2pt]
			s(\sqrt2-1) & \sqrt2+1
		\end{pmatrix}
		=\frac12
		\begin{pmatrix}
			2+\sqrt2 & s(\sqrt2-1)\\[2pt]
			s(\sqrt2-1) & 1+\sqrt2
		\end{pmatrix}.
		\]
		Since $P$ is symmetric,
		\[
		\Tr(P^2)=P_{11}^2+2P_{12}^2+P_{22}^2=\frac14+3\sqrt2.
		\]
		Moreover,
		\[
		\Tr\Bigl(\frac{A+B}{2}\Bigr)=\frac{\Tr A+\Tr B}{2}=\frac{5+5}{2}=5.
		\]
		Therefore
		\[
		d_{1/2}(A,B)^2
		=5-\Tr(P^2)
		=5-\Bigl(\frac14+3\sqrt2\Bigr)
		=\frac{19}{4}-3\sqrt2.
		\]
		
		\medskip
		\noindent\emph{Step 2: exact computation of $d_{1/2}(A,C)^2$ and $d_{1/2}(B,C)^2$.}
		First note that
		\[
		\sqrt A=\begin{pmatrix}2&0\\[2pt]0&1\end{pmatrix},
		\qquad
		\sqrt B
		=U\begin{pmatrix}2&0\\[2pt]0&1\end{pmatrix}U^{\mathsf T}
		=\begin{pmatrix}\frac32&\frac12\\[2pt]\frac12&\frac32\end{pmatrix}.
		\]
		Define
		\[
		M_0:=\frac{\sqrt A+\sqrt B}{2}
		=\begin{pmatrix}\frac74&\frac14\\[2pt]\frac14&\frac54\end{pmatrix}.
		\]
		A direct multiplication yields
		\[
		M_0^2=\begin{pmatrix}\frac{25}{8}&\frac34\\[2pt]\frac34&\frac{13}{8}\end{pmatrix}=C,
		\]
		so $C=M_0^2$ and hence $C^{1/4}=M_0^{1/2}$.
		
		Applying Lemma~\ref{lem:2x2-sqrt-closed-form} to $M_0$, we have
		\[
		C^{1/4}=M_0^{1/2}
		=\frac{M_0+\sqrt{\det M_0}\,I}{\sqrt{\Tr M_0+2\sqrt{\det M_0}}}.
		\]
		Here $\Tr M_0=3$ and $\det M_0=\frac{17}{8}$, and thus
		\[
		C^{1/4}
		=\frac{M_0+\sqrt{17/8}\,I}{\sqrt{3+2\sqrt{17/8}}}.
		\]
		
		Now set $Q:=A^{1/8}C^{1/4}A^{1/8}$. Then $A\kappaop{1/2}C=Q^2$ and
		\[
		d_{1/2}(A,C)^2=\Tr\Bigl(\frac{A+C}{2}\Bigr)-\Tr(Q^2).
		\]
		A direct substitution yields
		\[
		\Tr(Q^2)=\frac{29}{8}-\frac{\sqrt{17}}{4}+\frac{3\sqrt2}{4}+\frac{3\sqrt{34}}{16},
		\qquad
		\Tr\Bigl(\frac{A+C}{2}\Bigr)=\frac{\Tr A+\Tr C}{2}=\frac{5+19/4}{2}=\frac{39}{8}.
		\]
		Therefore
		\[
		d_{1/2}(A,C)^2
		=\frac{39}{8}-\Tr(Q^2)
		=\frac54+\frac{\sqrt{17}}{4}-\frac{3\sqrt2}{4}-\frac{3\sqrt{34}}{16}.
		\]
		
		Since $B=UAU^{\mathsf T}$ and (as one checks directly) $C=UCU^{\mathsf T}$, the unitary invariance of the trace together with the functional calculus implies
		$d_{1/2}(B,C)=d_{1/2}(A,C)$.
		
		\medskip
		\noindent\emph{Step 3: the triangle inequality fails strictly.}
		Since $d_{1/2}(B,C)=d_{1/2}(A,C)$, the triangle inequality
		\[
		d_{1/2}(A,B)\le d_{1/2}(A,C)+d_{1/2}(C,B)
		\]
		is equivalent to $d_{1/2}(A,B)\le 2d_{1/2}(A,C)$.
		Squaring (both sides are nonnegative) yields
		\[
		d_{1/2}(A,B)^2\le 4d_{1/2}(A,C)^2.
		\]
		Using the explicit formulas above, we compute
		\[
		d_{1/2}(A,B)^2-4d_{1/2}(A,C)^2
		=\frac{3\sqrt{34}}{4}-\sqrt{17}-\frac14.
		\]
		It remains to show that this quantity is positive. Indeed,
		\[
		\frac{3\sqrt{34}}{4}-\sqrt{17}-\frac14>0
		\quad\Longleftrightarrow\quad
		3\sqrt{34}>4\sqrt{17}+1.
		\]
		Squaring (both sides are positive) gives
		\[
		306>(4\sqrt{17}+1)^2=273+8\sqrt{17}
		\quad\Longleftrightarrow\quad
		33>8\sqrt{17}.
		\]
		Squaring again yields $33^2>64\cdot 17$, i.e.\ $1089>1088$, which is true. Hence
		\[
		d_{1/2}(A,B)>2d_{1/2}(A,C)=d_{1/2}(A,C)+d_{1/2}(C,B),
		\]
		so the triangle inequality fails.
	\end{proof}
	
	\begin{rem}
		Proposition~\ref{prop:counterexample-p12} shows that $d_p$ need not be a metric for $0<p<1$.
		It does not address the case $1<p<2$, which remains open in general.
	\end{rem}
	
	\section*{Declaration of competing interest}
	The author declares no competing interests.

	\section*{Data availability}
	No data was used for the research described in the article.

	\section*{Acknowledgments}
	The author thanks his advisor L.~Moln\'ar for suggesting this problem.
	This work is supported by the China Scholarship Council, the Young Elite Scientists Sponsorship Program for PhD Students
	(China Association for Science and Technology), and the Fundamental Research Funds for the Central Universities at Xi'an
	Jiaotong University (Grant No.~xzy022024045).
	

\end{document}